\newtheorem{theorem}{Theorem}[section]
\newtheorem{lemma}[theorem]{Lemma}
\newtheorem{corollary}[theorem]{Corollary}
\providecommand{\md}{\mathop{\mathit{md}}}
\providecommand{\con}{\wedge}
\providecommand{\dis}{\vee}
\providecommand{\imp}{\to}
\providecommand{\vp}{\varphi}
\providecommand{\sameas}{\leftrightharpoons}
\providecommand{\nat}{\mathds{N}}
\newlength{\templength}
\newcommand{\predfr}[1]{\boldsymbol{\mathfrak{#1}}}
\begin{document}

\title{On algorithmic expressivity of finite-variable fragments of   intuitionistic modal logics\thanks{The work on this paper, carried out at the Institute for Information Transmission Problems of the Russian Academy of Sciences, has been supported by Russian Science Foundation, Project~\mbox{21-18-00195}.}}

%

\author[1]{Mikhail Rybakov}
\author[2]{Dmitry Shkatov}

\affil[1]{IITP RAS, HSE University, Tver State University}
\affil[2]{University of the Witwatersrand, Johannesburg}


\date{}


\maketitle

\section{Introduction}

Modal and intuitionistic propositional logics are often poly-time
embeddable into their own fragments with a few variables (typically,
zero, one, or two), and similar embeddings are sometimes constructed
of fragments of logics with special properties into finite-variable
fragments of those logics.  The literature on the topic is quite
extensive~\cite{BS93,Spaan93,Halpern95,Hemaspaandra01,DS02,Sve03} and
includes contributions by the authors of this
paper~\cite{ChRyb03,Rybakov04,Rybakov06,Rybakov07a,Rybakov07,Rybakov08,RShIGPL18,RShICTAC18,RShSaicsit18,RShIGPL19,RShJLC21a,RShJLC22,RShTCS22}.

As a result, the validity problem for such fragments is as
computationally hard as the validity problem for the full logic.  (In
general, modal and superintuitionistic propositional logics, even
linearly approximable ones, may have arbitrarily hard fragments with a
few variables since, for every set $A \subseteq \mathds{N}$, one can
construct~\cite{RShJLC23} a linearly approximable logic whose fragment
with a few variables (typically zero, one, or two) recursively encodes
$A$. We obtain here similar embeddings for the intuitionistic modal
logics $\mathbf{FS}$ and $\mathbf{MIPC}$, introduced by, respectively,
Fisher~Servi~\cite{FS77} and Prior~\cite{Prior57}.  These logics have
been introduced as counterparts of bimodal propositional logics, and
can also be viewed as fragments of the predicate intuitionistic logic
$\mathbf{QInt}$ (for details, see~\cite{GKWZ}); we note that this is
not the only approach to constructing modal intuitionistic logics,
cf.~\cite{Dosen84,Dosen86,Speranski}.  The complexity of $\mathbf{FS}$
and $\mathbf{MIPC}$ remains unresolved, but the results presented here
show that single-variable fragments of these logics have the same
complexity as the full logics.

\section{Preliminaries}
\label{sec:prelim}

The intuitionistic modal language contains a countable set
$\mathcal{P}$ of propositional variables, the constant $\bot$, binary
connectives $\con$, $\dis$, and $\imp$, and unary modal connectives
$\Diamond$ and $\Box$. Formulas are defined in the usual way.  A
formula is \textit{positive} if it does not contain occurrences of
$\bot$. The set of propositional variables of a formula $\vp$ is
denoted by $\mathop{\mathit{var}} \vp$. The result of substituting a
formula $\psi$ for a variable $p$ into a formula $\vp$ is denoted by
$[\psi / p] \vp$. The modal depth of a formula $\vp$, denoted by
$\md \vp$, is the maximal number of nested modal connectives in
$\vp$. The length of a formula $\vp$, defined as the number of symbols
in $\vp$ (with the binary encoding of variables), is denoted by
$|\vp|$.

We define the logics $\mathbf{FS}$ and $\mathbf{MIPC}$ semantically.
A \textit{Kripke frame} is a pair
$\mathfrak{F} = \langle W, R \rangle$ where $W$ is a non-empty set of
\textit{worlds} and $R$ is a partial order on $W$.  An
\textit{$\mathbf{FS}$-frame} is a triple
$\predfr{F} = \langle W, R, \delta \rangle$, where
$\langle W, R \rangle$ is a Kripke frame and $\delta$ is a map
associating with each $w \in W$ a structure
$\langle \Delta_w, S_w \rangle$, with $\Delta_w$ being a non-empty set
of \textit{points} and $S_w$ a binary relation on $\Delta_w$ such
that, for every $w, v \in W$,
$$
\begin{array}{lcl}
  v \in R(w) ~\Rightarrow~ \Delta_w \subseteq \Delta_v
  & \mbox{ and }
  & \mbox{$S_w \subseteq S_v$}
\end{array}
$$  
An
\mbox{$\mathbf{FS}$-frame} $\predfr{F} = \langle W, R, \delta \rangle$
is an \textit{\mbox{$\mathbf{MIPC}$-frame}} if
$S_w = \Delta_w \times \Delta_w$, for every $w \in W$.  A
\textit{valuation} on an $\mathbf{FS}$-frame
$\langle W, R, \delta \rangle$ is a map associating with each
$w \in W$ and each $p \in \mathcal{P}$ a subset $V(w, p)$ of
$\Delta_w$ in such a way that
$$
v \in R(w) ~\Rightarrow~  V(w, p) \subseteq V(v, p).
$$
The pair $\mathfrak{M} = \langle \predfr{F}, V \rangle$, where
$\predfr{F}$ is an $\mathbf{FS}$-frame and $V$ a valuation on
$\predfr{F}$, is called an \textit{$\mathbf{FS}$-model}.  An
\textit{$\mathbf{MIPC}$-model} is an $\mathbf{FS}$-model over an
$\mathbf{MIPC}$-frame.  The \textit{truth-relation $\models$} is
defined by recursion (here, $\mathfrak{M}$ is a model, $w \in W$,
$x \in \Delta_w$, and $\vp$ is a formula):
\settowidth{\templength}{\mbox{$\mathfrak{M}, w, x \models \vp_1 \imp
    \vp_2$}}
\begin{itemize}
\item
\mbox{\parbox{\templength}{$\mathfrak{M},  w, x  \models p$}
           \mbox{~~$\sameas$~~}
           \mbox{$x \in V(w, p)$ \qquad if $p \in \mathcal{P}$;}}
\item
\mbox{$\mathfrak{M},  w, x  \not\models \bot$;}
\item
\mbox{\parbox{\templength}{$\mathfrak{M},  w, x  \models \vp_1 \con \vp_2$}
           \mbox{~~$\sameas$~~}
           \mbox{$\mathfrak{M},  w, x  \models \vp_1$
             and $\mathfrak{M},  w, x  \models \vp_2$;}}
\item
\mbox{\parbox{\templength}{$\mathfrak{M},  w, x  \models \vp_1 \dis \vp_2$}
           \mbox{~~$\sameas$~~}
           \mbox{$\mathfrak{M},  w, x  \models \vp_1$
             or $\mathfrak{M},  w, x  \models \vp_2$;}}
\item
\mbox{\parbox{\templength}{$\mathfrak{M},  w, x  \models \vp_1 \imp \vp_2$}
           \mbox{~~$\sameas$~~}
           \mbox{$\mathfrak{M},  v, x  \not\models \vp_1$
             or $\mathfrak{M},  v, x  \models \vp_2$
             whenever $v \in R(w)$;}}
\item
\mbox{\parbox{\templength}{$\mathfrak{M},  w, x
              \models \Diamond \vp_1$} \mbox{~~$\sameas$~~}
           \mbox{$\mathfrak{M},  w, y  \models \vp_1$,
             for some $y \in S_w(x)$;}}
\item
\mbox{\parbox{\templength}{$\mathfrak{M},  w, x
              \models \Box \vp_1$} \mbox{~~$\sameas$~~}
           \mbox{$\mathfrak{M},  v, y  \models \vp_1$
             whenever $v \in R(w)$ and $y \in S_v(x)$.}}

\end{itemize}

A formula $\vp$ is \textit{true} in a model $\mathfrak{M}$ (notation:
$\mathfrak{M} \models \vp$) if $\mathfrak{M}, w, x \models \vp$, for
every world $w$ of $\mathfrak{M}$ and every point $x$ of $w$.  A
formula $\vp$ is \textit{valid} an $\mathbf{FS}$-frame $\mathfrak{F}$
if $\vp$ is true in every model over $\mathfrak{F}$.  Logics
$\mathbf{FS}$ and $\mathbf{MIPC}$ are defined as sets of formulas
valid on, respectively, every $\mathbf{FS}$-frame and every
$\mathbf{MIPC}$-frame.

\section{Main results}
\label{sec:embedding}

In this section, we prove that logics $\mathbf{FS}$ and
$\mathbf{MIPC}$ are polynomial-time embeddable into their own
fragments with a single propositional variable.  We first poly-time
embed these logics into their own positive fragments. Let $\vp$ be a
formula and $f \in \mathcal{P} \setminus \mathop{\mathit{var}} \vp$.
Define
$$
\begin{array}{llll}
  
  \vp^f  =   [f / \bot]\vp; &  F_1  =  \Diamond^{\leqslant \md \vp}
                                f \imp f; & 
                                              F_2  =  f \imp \Box^{\leqslant \md \vp} f; & 
                                                                                             \displaystyle    F_3  =  \bigwedge\limits_{\mathclap{p \in \mathop{\mathit{var}} \vp}}
                                                                                             \Box^{\leqslant \md \vp} ( f \imp p ),
\end{array}
$$
and put $F  =  F_1 \con F_2 \con F_3$.
\begin{lemma}
  \label{lem:positive}
  Let $\vp$ be a formula,
  $f \in \mathcal{P} \setminus \mathop{\mathit{var}} \vp$, and
  $L \in \{\mathbf{FS}, \mathbf{MIPC}\}$.  Then,
  $$
  \begin{array}{lcl}
    \vp \in L & \iff & F \imp \vp^f \in L.
  \end{array}          
  $$
\end{lemma}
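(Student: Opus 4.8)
The two directions are of very different character, so I would treat them separately, putting the light direction first. For $F \imp \vp^f \in L \Rightarrow \vp \in L$ I would argue purely syntactically, exploiting that $L$ (being the set of formulas valid on a class of frames) is closed under uniform substitution, necessitation, and modus ponens. Apply the substitution $[\bot / f]$ to the assumed theorem $F \imp \vp^f$. Since $f \notin \mathop{\mathit{var}} \vp$, the substitution undoes the replacement of $\bot$ by $f$, so $[\bot / f]\vp^f = \vp$, while $[\bot / f]F = ([\bot / f]F_1) \con ([\bot / f]F_2) \con ([\bot / f]F_3)$ collapses to a theorem: $[\bot / f]F_1 = \Diamond^{\leqslant \md \vp}\bot \imp \bot$ is valid because $\Diamond\bot$ is unsatisfiable, so its antecedent is never forced; $[\bot / f]F_2 = \bot \imp \Box^{\leqslant \md \vp}\bot$ is valid as it has antecedent $\bot$; and each conjunct of $[\bot / f]F_3$ is $\Box^{\leqslant \md \vp}(\bot \imp p)$, which is valid by necessitation from the axiom $\bot \imp p$. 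Hence $[\bot / f]F \in L$, and from $([\bot / f]F) \imp \vp \in L$ together with $[\bot / f]F \in L$ we get $\vp \in L$ by modus ponens.

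The substantive direction is $\vp \in L \Rightarrow F \imp \vp^f \in L$, which I would establish semantically. Fix an $L$-model $\mathfrak{M}$ and a point $(w_0, x_0)$ with $\mathfrak{M}, w_0, x_0 \models F$; the goal is $\mathfrak{M}, w_0, x_0 \models \vp^f$. The guiding picture is that $F$ forces the variable $f$ to behave, within the modal horizon of $\vp$, exactly like $\bot$. First I would isolate a saturation sublemma: \emph{at every point $(w, x)$ within modal depth $\md \vp$ of $(w_0, x_0)$ at which $f$ holds, every subformula $\psi^f$ of $\vp^f$ is true}. The conjunct $F_3$ supplies this for variables, since $\Box^{\leqslant \md \vp}(f \imp p)$ forces every $p \in \mathop{\mathit{var}}\vp$ wherever $f$ holds (here the reflexivity of $R$ matters, so that $\Box^{\leqslant 1}$ already reaches the same-world $S_w$-successors that $\Diamond$ inspects); persistence under $R$ and an induction on $\psi$ then lift this to all positive subformulas, $f$-true points thus acting as \emph{explosive} endpoints at which the image of $\bot$ is vacuously harmless because everything holds.

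The core, and the main obstacle, is the \emph{faithfulness} step: reducing truth of $\vp^f$ at the root to truth of $\vp$ in a model where the $f$-true region has been excised, so that validity of $\vp$ can be invoked. The natural transformation is to pass to the submodel $\mathfrak{M}^-$ whose surviving points are those $(w, x)$ with $\mathfrak{M}, w, x \models f \imp \bot$, i.e. where $f$ is hereditarily false; this choice is exactly what makes the survivor domains $R$-monotone, so that the inclusions $\Delta^-_w \subseteq \Delta^-_v$ and $S^-_w \subseteq S^-_v$ are preserved and (in the $\mathbf{MIPC}$ case) the fullness $S^-_w = \Delta^-_w \times \Delta^-_w$ is inherited — verifying these frame conditions, together with non-emptiness of the surviving domains, is the delicate part. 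The behaviour of $f$ along the two accessibility dimensions is then governed by the remaining conjuncts: $F_1 = \Diamond^{\leqslant \md \vp} f \imp f$ guarantees that from an $f$-false point no $\Diamond$-step within the depth budget can reach an $f$-true point, so diamonds never ``escape'' into the excised region, while $F_2 = f \imp \Box^{\leqslant \md \vp} f$ keeps that region closed under $\Box$-steps. I would package the comparison as an induction on $\vp$ carrying a modal-depth parameter; the implication clause is where the argument is tightest, since ascending along $R$ may enter an $f$-true point, and there one must invoke the saturation sublemma to see that the consequent is already true, matching the ex-falso behaviour of the pruned model. Provided the depth bookkeeping is arranged so that the depth-$\md \vp$ modalities in $F_1, F_2, F_3$ cover precisely the points that an evaluation of the depth-$\md \vp$ formula $\vp^f$ can inspect, the induction yields $\mathfrak{M}, w_0, x_0 \models \vp^f \iff \mathfrak{M}^-, w_0, x_0 \models \vp$, and validity of $\vp$ closes the case.
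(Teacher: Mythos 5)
Your easy direction is fine: closure of $L$ under substitution holds here because truth is $R$-hereditary in these models, $[\bot/f]\vp^f=\vp$ since $f$ is fresh, and $[\bot/f]F$ is valid, so modus ponens finishes it. The substantive direction, however, has two genuine gaps. First, your saturation sublemma is simply false for $\mathbf{FS}$ as the paper defines it: $S_w(x)$ may be empty, and at such a dead end every $\Diamond$-subformula of $\vp^f$ is false no matter what $F$ says. Concretely, take one reflexive world $w_0$, one point $x_0$, $S_{w_0}=\emptyset$, and $f$ together with all variables true at $(w_0,x_0)$: every boxed formula holds vacuously, so $F_2$ and $F_3$ hold, and $F_1$ holds because its consequent $f$ is true; yet for $\vp=\bot\imp\Diamond\bot\in\mathbf{FS}$ the formula $\vp^f=f\imp\Diamond f$ fails at $(w_0,x_0)$. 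So at $f$-true points within the horizon it is not the case that all images $\psi^f$ hold, and this very configuration is also the unhandled case of your surgery (an $f$-true root does not survive the excision, and you never say what happens then). Any proof for $\mathbf{FS}$ must bring in something you do not use — seriality of $S_w$ built into the intended frames, or a strengthening of $F$ such as an extra conjunct $\Box^{\leqslant\md\vp}(f\imp\Diamond f)$, which is harmless for your converse direction since its $[\bot/f]$-instance is valid. For $\mathbf{MIPC}$ the $\Diamond$-case is rescued by totality of $S_w$ (the point is its own witness), but see next.

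Second, you read $F_1$ and $F_2$ as pointwise laws (``from an $f$-false point no $\Diamond$-step reaches $f$'', ``the $f$-region is closed under $\Box$-steps''), but they are only assumed true at the single root $(w_0,x_0)$, and truth of $f\imp\Box^{\leqslant\md\vp}f$ there constrains only the column of $x_0$: it says nothing about an $f$-true point $(w,y)$ with $y\neq x_0$ reached by an $S$-step. So the closure you need in the $\Box$-case of saturation does not follow as you invoke it; it has to come from the dichotomy that $F_1$ actually provides, namely that in a world $v$ with $f$ false at $(v,x_0)$ no point within $\Diamond^{\leqslant\md\vp}$-reach of the column satisfies $f$, while in a world with $f$ true at $(v,x_0)$ the conjuncts $F_2$ and $F_3$ saturate the entire horizon. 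The same anchoring breaks your excision: deleting points by hereditary $f$-falsity removes points $(v,x)$ at which $f$ is still false but becomes true at some $R$-successor, and in the $\imp$ and $\Box$ cases of your final induction the $\mathfrak{M}$-side quantifies over exactly such points, where neither the induction hypothesis (they are deleted from $\mathfrak{M}^-$) nor saturation ($f$ is false there) applies; the claimed equivalence $\mathfrak{M},w_0,x_0\models\vp^f\iff\mathfrak{M}^-,w_0,x_0\models\vp$ therefore does not go through as organized. The column dichotomy points to the surgery that does work: delete whole worlds $v$ with $f$ true at $(v,x_0)$, not individual points. Then domains of surviving worlds are untouched (non-emptiness and, for $\mathbf{MIPC}$, fullness of $S_w$ are free, and no domain-monotonicity issue arises), every deleted world is fully saturated within the horizon, and the root survives exactly when the $f$-true-root case does not already apply.
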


Since $\vp^f$ and $F$ are both positive, the map
$e \colon \vp \mapsto (F \imp \vp^f)$ embeds $\mathbf{FS}$ and
$\mathbf{MIPC}$ into their own positive fragments.

We next define a polytime computable function $\cdot^\ast$ from the
set of positive formulas to the set of one-variable positive formulas
and show that, for $L \in \{ \mathbf{FS}, \mathbf{MIPC} \}$ and every
positive $\vp$,
$$
\begin{array}{lcl}
  \vp^\ast \in L & \iff & \vp \in L.
\end{array}
$$
Hence, for every $\vp$,
$$
\begin{array}{lclcl}
  \vp \in L & \iff & e(\vp) \in L & \iff & e(\vp)^\ast \in L.
\end{array}
$$

The formula $\vp^\ast$ shall be obtain from $\vp$ using a
substitution.  We next define the formulas that shall be substituted
for propositional variables of $\vp$. These formulas, except $G_1$,
$G_2$, and $G_3$, are divided into `levels', indexed by elements
of~$\mathds{N}$; formulas of level $0$ are denoted $A^0_i$ or $B^0_i$,
those of level $1$, by $A^1_i$ and $B^1_i$, etc.  We begin with $G_1$,
$G_2$, and $G_3$, as well as formulas of levels~$0$ and~$1$:
$$
\begin{array}{lclclcl}
  G_1 & = & \Diamond p; & A_1^1 & = & A_1^0 \con A_2^0 \imp B_1^0 \dis B_2^0;   \\
  G_2 & = & \Diamond p \imp p; &  A_2^1 & = & A_1^0 \con
                                                             B_1^0 \imp
                                                             A_2^0 \dis
                                                             B_2^0; \\
  G_3 & = & p \imp \Box p; & A_3^1 & =
                                       & A_1^0 \con B_2^0
                                         \imp A_2^0 \dis
                                         B_1^0;  \\
  A_1^0 & = & G_2 \imp G_1 \dis G_3; & B_1^1& = & A_2^0 \con B_1^0 \imp A_1^0 \dis B_2^0; \\
  A_2^0 & = & G_3 \imp G_1 \dis G_2; & B_2^1& = & A_2^0 \con B_2^0 \imp A_1^0 \dis B_1^0; \\
  B_1^0 & = & G_1 \imp G_2 \dis G_3; & B_3^1& = & B_1^0 \con B_2^0 \imp A_1^0 \dis A_2^0. \\
  B_2^0 & = & A_1^0 \con A_2^0 \con B_1^0 \imp G_1 \dis G_2 \dis G_3;  \\
\end{array}
$$
We proceed by recursion.  Let $k \geqslant 1$.  Suppose the formulas
$A^k_1, \ldots, A^k_{n_k}$ and $B^k_1, \ldots, B^k_{n_k}$ have been
defined, with $n_k$ being the number of formulas of the form $A^k_i$
and, also, the number of formulas of the form $B^k_i$ (e.g., if
$k = 1$, then $n_k = 3$; the recursive definition for the cases where
$k \geqslant 2$ is to be given).  Define a linear order $\prec$ on the
set $(\nat \setminus \{0, 1\}) \times (\nat \setminus \{0, 1\})$ as in
the following picture, so that
$\langle i, j \rangle \prec \langle i', j' \rangle$ if, and only if,
there exists a path along one or more arrows from
$\langle i, j \rangle $ to $\langle i', j' \rangle$:

\begin{center}
\begin{tikzpicture}[scale=0.64]

\coordinate (g00)   at (0,0);
\coordinate (g10)   at (1,0);
\coordinate (g20)   at (2,0);
\coordinate (g30)   at (3,0);
\coordinate (g40)   at (4,0);
\coordinate (g50)   at (5,0);
\coordinate (g60)   at (6,0);
\coordinate (g70)   at (7,0);
\coordinate (g80)   at (8,0);
\coordinate (g01)   at (0,1);
\coordinate (g11)   at (1,1);
\coordinate (g21)   at (2,1);
\coordinate (g31)   at (3,1);
\coordinate (g41)   at (4,1);
\coordinate (g51)   at (5,1);
\coordinate (g61)   at (6,1);
\coordinate (g71)   at (7,1);
\coordinate (g81)   at (8,1);
\coordinate (g91)   at (9,1);
\coordinate (g02)   at (0,2);
\coordinate (g12)   at (1,2);
\coordinate (g22)   at (2,2);
\coordinate (g32)   at (3,2);
\coordinate (g42)   at (4,2);
\coordinate (g52)   at (5,2);
\coordinate (g62)   at (6,2);
\coordinate (g72)   at (7,2);
\coordinate (g82)   at (8,2);
\coordinate (g92)   at (9,2);
\coordinate (g03)   at (0,3);
\coordinate (g13)   at (1,3);
\coordinate (g23)   at (2,3);
\coordinate (g33)   at (3,3);
\coordinate (g43)   at (4,3);
\coordinate (g53)   at (5,3);
\coordinate (g63)   at (6,3);
\coordinate (g73)   at (7,3);
\coordinate (g83)   at (8,3);
\coordinate (g93)   at (9,3);
\coordinate (g04)   at (0,4);
\coordinate (g14)   at (1,4);
\coordinate (g24)   at (2,4);
\coordinate (g34)   at (3,4);
\coordinate (g44)   at (4,4);
\coordinate (g54)   at (5,4);
\coordinate (g64)   at (6,4);
\coordinate (g74)   at (7,4);
\coordinate (g84)   at (8,4);
\coordinate (g94)   at (9,4);
\coordinate (g05)   at (0,5);
\coordinate (g15)   at (1,5);
\coordinate (g25)   at (2,5);
\coordinate (g35)   at (3,5);
\coordinate (g45)   at (4,5);
\coordinate (g55)   at (5,5);
\coordinate (g65)   at (6,5);
\coordinate (g75)   at (7,5);
\coordinate (g85)   at (8,5);
\coordinate (g95)   at (9,5);
\coordinate (g06)   at (0,6);
\coordinate (g16)   at (1,6);
\coordinate (g26)   at (2,6);
\coordinate (g36)   at (3,6);
\coordinate (g46)   at (4,6);
\coordinate (g56)   at (5,6);
\coordinate (g66)   at (6,6);
\coordinate (g76)   at (7,6);
\coordinate (g86)   at (8,6);
\coordinate (g96)   at (9,6);
\coordinate (g07)   at (0,7);
\coordinate (g17)   at (1,7);
\coordinate (g27)   at (2,7);
\coordinate (g37)   at (3,7);
\coordinate (g47)   at (4,7);
\coordinate (g57)   at (5,7);
\coordinate (g67)   at (6,7);
\coordinate (g77)   at (7,7);
\coordinate (g87)   at (8,7);
\coordinate (g08)   at (0,8);
\coordinate (g18)   at (1,8);
\coordinate (g28)   at (2,8);
\coordinate (g38)   at (3,8);
\coordinate (g48)   at (4,8);
\coordinate (g58)   at (5,8);
\coordinate (g68)   at (6,8);
\coordinate (g78)   at (7,8);
\coordinate (g88)   at (8,8);
\coordinate (g09)   at (0,9);
\coordinate (g19)   at (1,9);
\coordinate (g29)   at (2,9);
\coordinate (g39)   at (3,9);
\coordinate (g49)   at (4,9);
\coordinate (g59)   at (5,9);
\coordinate (g69)   at (6,9);
\coordinate (g79)   at (7,9);
\coordinate (g89)   at (8,9);
\coordinate (g90)   at (9,0);
\coordinate (g99)   at (9,9);
\coordinate (g010)  at (0,10);
\coordinate (g110)  at (1,10);
\coordinate (g210)  at (2,10);
\coordinate (g310)  at (3,10);
\coordinate (g410)  at (4,10);
\coordinate (g510)  at (5,10);
\coordinate (g610)  at (6,10);
\coordinate (g710)  at (7,10);
\coordinate (g810)  at (8,10);
\coordinate (g910)  at (9,10);
\coordinate (g100)  at (10,0);
\coordinate (g101)  at (10,1);
\coordinate (g102)  at (10,2);
\coordinate (g103)  at (10,3);
\coordinate (g104)  at (10,4);
\coordinate (g105)  at (10,5);
\coordinate (g106)  at (10,6);
\coordinate (g107)  at (10,7);
\coordinate (g108)  at (10,8);

\begin{scope}[>=latex, -, shorten >= 2pt, shorten <= -1.96pt, color=black!12]
\draw [] (g01) -- (g101);
\draw [] (g02) -- (g102);
\draw [] (g03) -- (g103);
\draw [] (g04) -- (g104);
\draw [] (g05) -- (g105);
\draw [] (g06) -- (g106);
\draw [] (g07) -- (g107);
\draw [] (g08) -- (g108);
\draw [] (g10) -- (g19);
\draw [] (g20) -- (g29);
\draw [] (g30) -- (g39);
\draw [] (g40) -- (g49);
\draw [] (g50) -- (g59);
\draw [] (g60) -- (g69);
\draw [] (g70) -- (g79);
\draw [] (g80) -- (g89);
\draw [] (g90) -- (g99);
\end{scope}

\begin{scope}[>=latex, ->, shorten >= 0pt, shorten <= -5.96pt]
\draw [] (g00) -- (g100);
\draw [] (g00) -- (g09);
\end{scope}

\node [below] at (g10) {$1$};
\node [below] at (g20) {$2$};
\node [below] at (g30) {$3$};
\node [below] at (g40) {$4$};
\node [below] at (g50) {$5$};
\node [below] at (g60) {$6$};
\node [below] at (g70) {$7$};
\node [below] at (g80) {$8$};
\node [below] at (g90) {$9$};
\node [left ] at (g01) {$1$};
\node [left ] at (g02) {$2$};
\node [left ] at (g03) {$3$};
\node [left ] at (g04) {$4$};
\node [left ] at (g05) {$5$};
\node [left ] at (g06) {$6$};
\node [left ] at (g07) {$7$};
\node [left ] at (g08) {$8$};

\begin{scope}[>=latex, ->, shorten >= 1.96pt, shorten <= 1.96pt]
\draw [] (g22) -- (g32);
\draw [] (g32) -- (g33);
\draw [] (g33) -- (g23);
\draw [] (g23) -- (g24);
\draw [] (g24) -- (g34);
\draw [] (g34) -- (g44);
\draw [] (g44) -- (g43);
\draw [] (g43) -- (g42);
\draw [] (g42) -- (g52);
\draw [] (g52) -- (g53);
\draw [] (g53) -- (g54);
\draw [] (g54) -- (g55);
\draw [] (g55) -- (g45);
\draw [] (g45) -- (g35);
\draw [] (g35) -- (g25);
\draw [] (g25) -- (g26);
\draw [] (g26) -- (g36);
\draw [] (g36) -- (g46);
\draw [] (g46) -- (g56);
\draw [] (g56) -- (g66);
\draw [] (g66) -- (g65);
\draw [] (g65) -- (g64);
\draw [] (g64) -- (g63);
\draw [] (g63) -- (g62);
\draw [] (g62) -- (g72);
\draw [] (g72) -- (g73);
\draw [] (g73) -- (g74);
\draw [] (g74) -- (g75);
\draw [] (g75) -- (g76);
\draw [] (g76) -- (g77);
\draw [] (g77) -- (g67);
\draw [] (g67) -- (g57);
\draw [] (g57) -- (g47);
\draw [] (g47) -- (g37);
\draw [] (g37) -- (g27);
\draw [] (g27) -- (g28);
\draw [] (g28) -- (g38);
\draw [] (g38) -- (g48);
\draw [] (g48) -- (g58);
\draw [] (g58) -- (g68);
\draw [] (g68) -- (g78);
\draw [] (g78) -- (g88);
\draw [] (g88) -- (g87);
\draw [] (g87) -- (g86);
\draw [] (g86) -- (g85);
\draw [] (g85) -- (g84);
\draw [] (g84) -- (g83);
\draw [] (g83) -- (g82);
\draw [] (g82) -- (g92);
\draw [] (g92) -- (g93);
\draw [] (g93) -- (g94);
\draw [] (g94) -- (g95);
\end{scope}

\node [] at (g96) {$\vdots$};

\filldraw [] (g22) circle [radius=2.5pt]   ;
\filldraw [] (g32) circle [radius=2.5pt]   ;
\filldraw [] (g42) circle [radius=2.5pt]   ;
\filldraw [] (g52) circle [radius=2.5pt]   ;
\filldraw [] (g62) circle [radius=2.5pt]   ;
\filldraw [] (g72) circle [radius=2.5pt]   ;
\filldraw [] (g82) circle [radius=2.5pt]   ;
\filldraw [] (g92) circle [radius=2.5pt]   ;
\filldraw [] (g23) circle [radius=2.5pt]   ;
\filldraw [] (g33) circle [radius=2.5pt]   ;
\filldraw [] (g43) circle [radius=2.5pt]   ;
\filldraw [] (g53) circle [radius=2.5pt]   ;
\filldraw [] (g63) circle [radius=2.5pt]   ;
\filldraw [] (g73) circle [radius=2.5pt]   ;
\filldraw [] (g83) circle [radius=2.5pt]   ;
\filldraw [] (g93) circle [radius=2.5pt]   ;
\filldraw [] (g24) circle [radius=2.5pt]   ;
\filldraw [] (g34) circle [radius=2.5pt]   ;
\filldraw [] (g44) circle [radius=2.5pt]   ;
\filldraw [] (g54) circle [radius=2.5pt]   ;
\filldraw [] (g64) circle [radius=2.5pt]   ;
\filldraw [] (g74) circle [radius=2.5pt]   ;
\filldraw [] (g84) circle [radius=2.5pt]   ;
\filldraw [] (g94) circle [radius=2.5pt]   ;
\filldraw [] (g25) circle [radius=2.5pt]   ;
\filldraw [] (g35) circle [radius=2.5pt]   ;
\filldraw [] (g45) circle [radius=2.5pt]   ;
\filldraw [] (g55) circle [radius=2.5pt]   ;
\filldraw [] (g65) circle [radius=2.5pt]   ;
\filldraw [] (g75) circle [radius=2.5pt]   ;
\filldraw [] (g85) circle [radius=2.5pt]   ;
\filldraw [] (g95) circle [radius=2.5pt]   ;
\filldraw [] (g26) circle [radius=2.5pt]   ;
\filldraw [] (g36) circle [radius=2.5pt]   ;
\filldraw [] (g46) circle [radius=2.5pt]   ;
\filldraw [] (g56) circle [radius=2.5pt]   ;
\filldraw [] (g66) circle [radius=2.5pt]   ;
\filldraw [] (g76) circle [radius=2.5pt]   ;
\filldraw [] (g86) circle [radius=2.5pt]   ;
\filldraw [] (g27) circle [radius=2.5pt]   ;
\filldraw [] (g37) circle [radius=2.5pt]   ;
\filldraw [] (g47) circle [radius=2.5pt]   ;
\filldraw [] (g57) circle [radius=2.5pt]   ;
\filldraw [] (g67) circle [radius=2.5pt]   ;
\filldraw [] (g77) circle [radius=2.5pt]   ;
\filldraw [] (g87) circle [radius=2.5pt]   ;
\filldraw [] (g28) circle [radius=2.5pt]   ;
\filldraw [] (g38) circle [radius=2.5pt]   ;
\filldraw [] (g48) circle [radius=2.5pt]   ;
\filldraw [] (g58) circle [radius=2.5pt]   ;
\filldraw [] (g68) circle [radius=2.5pt]   ;
\filldraw [] (g78) circle [radius=2.5pt]   ;
\filldraw [] (g88) circle [radius=2.5pt]   ;



\end{tikzpicture}
\end{center}

We can then define an enumeration $g$ of the pairs
$\langle i, j \rangle \in (\nat \setminus \{0, 1\}) \times (\nat
\setminus \{0, 1\})$ according to $\prec$, i.e., so that
$g(2, 2) = 1$, $g(3, 2) = 2$, $g(3, 3) = 3$, $g(2, 3) = 4$, etc.  Now,
for every $i, j \in \{2, \ldots, n_k \}$, define
$$
\begin{array}{lclclcl}
  A_{g(i,j)}^{k+1} & = & A_1^k \imp B_1^k \dis A_i^k \dis B_j^k; & &
  B_{g(i,j)}^{k+1} & = & B_1^k \imp A_1^k \dis A_i^k \dis B_j^k,
\end{array}
$$
and let $n_{k+1}$ be the number of the formulas of the form
$A_i^{k+1}$ (which is the same as the number of formulas of the form
$B_i^{k+1}$) so defined; notice that $n_{k+1} = (n_k - 1)^2$.  This
completes the recursive definition of $A^k_i$ and $B^k_i$.

Next, put
$$
\begin{array}{lcl}
  l_0 & = & |A^0_1| + |B^0_1| + |A^0_2| + |B^0_2|.
\end{array}            
$$

\begin{lemma}
  \label{lem:inequ-2}
  There exists $k_0\in \mathds{N}$ such that $n_k > l_0 \cdot 5^k$
  whenever $k \geqslant k_0$.
\end{lemma}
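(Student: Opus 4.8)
The plan is to exploit the fact that the recurrence $n_{k+1} = (n_k - 1)^2$, starting from $n_1 = 3$, drives $n_k$ upward \emph{doubly} exponentially, whereas the competing quantity $l_0 \cdot 5^k$ grows only singly exponentially; after taking logarithms the former beats the latter for all large $k$. Concretely, I would show that $\log_2 n_k$ grows like $2^k$, while $\log_2(l_0 \cdot 5^k) = \log_2 l_0 + k\log_2 5$ is merely affine in $k$, so the inequality $\log_2 n_k > \log_2(l_0 \cdot 5^k)$ — equivalently $n_k > l_0 \cdot 5^k$ — holds from some point on.

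First I would record the elementary bound $n_{k+1} = (n_k - 1)^2 \geqslant n_k^2 / 4$, valid whenever $n_k \geqslant 2$ (since then $n_k - 1 \geqslant n_k/2$); all terms of our sequence satisfy $n_k \geqslant 3$, so this applies throughout. Computing the first values $n_1 = 3$, $n_2 = 4$, $n_3 = 9$ shows $n_3 \geqslant 8$, and since $n_k \geqslant 8$ gives $n_{k+1} \geqslant n_k^2/4 \geqslant 2 n_k$, a routine induction yields $n_k \geqslant 8$ for all $k \geqslant 3$ and, in particular, $n_k \to \infty$.

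Next I would linearize by setting $m_k = \log_2 n_k$. The bound $n_{k+1} \geqslant n_k^2/4$ becomes $m_{k+1} \geqslant 2 m_k - 2$, i.e.\ $m_{k+1} - 2 \geqslant 2(m_k - 2)$; unfolding this from $k = 3$, where $m_3 = \log_2 9 > 3$ and hence $m_3 - 2 > 1$, gives $m_k - 2 \geqslant 2^{k-3}(m_3 - 2) > 2^{k-3}$ for every $k \geqslant 3$. Thus $m_k > 2^{k-3}$. It remains to compare this with the target $m_k > \log_2 l_0 + k \log_2 5$: because $2^{k-3}$ grows exponentially in $k$ while the right-hand side is affine in $k$, there is a $k_0$ such that $2^{k-3} > \log_2 l_0 + k\log_2 5$ for all $k \geqslant k_0$, and for such $k$ we get $n_k > l_0 \cdot 5^k$, as required.

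The estimates here are all elementary; the only point demanding a little care is the bookkeeping of the ``head start''. Since the gap we manufacture between $\log_2 n_k$ and the affine bound is itself exponential, the fixed constant $l_0$ and the single exponential $5^k$ are swamped regardless of their size — but one must confirm that the sequence actually enters the doubling regime $n_k \geqslant 8$, which the explicit values $n_1 = 3$, $n_2 = 4$, $n_3 = 9$ guarantee. The crux of the argument is thus the double- versus single-exponential comparison, and once that is set up the choice of $k_0$ is immediate.
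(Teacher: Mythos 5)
Your proof is correct: all the steps check out --- $n_{k+1}=(n_k-1)^2\geqslant n_k^2/4$ once $n_k\geqslant 2$, the explicit values $n_1=3$, $n_2=4$, $n_3=9$ launch the induction, and the linearized bound $m_k-2\geqslant 2^{k-3}(m_3-2)>2^{k-3}$ makes $\log_2 n_k$ dominate the affine function $\log_2 l_0+k\log_2 5$ from some $k_0$ on. The paper states this lemma without giving a proof, and your double- versus single-exponential comparison is precisely the natural intended argument, so there is nothing to fault and no substantive divergence to report.
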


Now, let $\vp$ be a positive formula with
$\mathop{\mathit{var}} \vp = \{p_1, \ldots, p_s \}$.  Let ${k_\vp}$ be
the least integer $k$ such that $|\vp| < l_0 \cdot 5^{k}$.  By
Lemma~\ref{lem:inequ-2},
$n_{{k_\vp}+k_0} > l_0 \cdot 5^{{k_\vp}+k_0}$; hence,
$$
n_{{k_\vp}+k_0} > l_0 \cdot 5^{{k_\vp}+k_0} > 5^{k_0} \cdot |\vp| >
|\vp| \geqslant s.
$$

Lastly, define $\vp^\ast$ to be the result of substituting into $\vp$,
for every $r \in \{1, \ldots, s\}$, the formula
$A_r^{{k_\vp}+k_0} \dis B_r^{{k_\vp}+k_0}$ for the variable $p_r$
(this substitution is well defined since
$n_{{k_\vp}+k_0} > s$).

\medskip

We next show that $\vp^\ast$ is poly-time computable from $\vp$.

\begin{lemma}
  \label{lem:inequ-1}
  For every $k \geqslant 0$ and every $i \in \{1, \ldots, n_k\}$,
  $$
  \begin{array}{lcl}
    |A^k_i| < l_0 \cdot 5^k & \mbox{and} & |B^k_i| < l_0 \cdot 5^k.
  \end{array}
  $$
\end{lemma}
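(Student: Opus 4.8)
The plan is to prove both inequalities simultaneously by induction on $k$, since the construction of the level-$(k+1)$ formulas from the level-$k$ ones is entirely uniform. For the base case $k = 0$, observe that the four formulas $A^0_1, A^0_2, B^0_1, B^0_2$ all have strictly positive length, so each of them is strictly smaller than their total $l_0 = |A^0_1| + |B^0_1| + |A^0_2| + |B^0_2| = l_0 \cdot 5^0$. As $n_0 = 2$, this already establishes $|A^0_i|, |B^0_i| < l_0 \cdot 5^0$ for every admissible index $i$.

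For the inductive step, fix $k \geqslant 0$ and assume $|A^k_i|, |B^k_i| < l_0 \cdot 5^k$ for all $i \in \{1, \ldots, n_k\}$. By definition, every level-$(k+1)$ formula has the shape $A^k_1 \imp B^k_1 \dis A^k_i \dis B^k_j$ or $B^k_1 \imp A^k_1 \dis A^k_i \dis B^k_j$ with $i, j \in \{2, \ldots, n_k\}$; that is, it is assembled from exactly four level-$k$ formulas using three binary connectives and introducing no new variable occurrences. Hence there is a constant $c$ (equal to $3$ under the convention that each binary connective contributes one symbol, and still a fixed constant if parentheses are counted) with $|A^{k+1}_{g(i,j)}| \leqslant |A^k_1| + |B^k_1| + |A^k_i| + |B^k_j| + c$, and symmetrically for the $B$-formulas.

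Applying the induction hypothesis to each of the four level-$k$ summands yields the bound $4 \cdot l_0 \cdot 5^k + c$. It then remains to verify that this does not exceed $l_0 \cdot 5^{k+1} = 5 \cdot l_0 \cdot 5^k$, which reduces to the single inequality $c \leqslant l_0 \cdot 5^k$. Since $l_0$ is a sum of the lengths of four nontrivial formulas we have $l_0 \geqslant c$, and $5^k \geqslant 1$, so the inequality holds for every $k \geqslant 0$; this gives $|A^{k+1}_i|, |B^{k+1}_i| < l_0 \cdot 5^{k+1}$ and closes the induction.

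The only genuinely delicate point is this bookkeeping of the additive constant. The base $5$ (rather than $4$) is precisely what creates the slack $l_0 \cdot 5^k$ required to absorb the $c$ connective symbols added at each level, because $4 \cdot l_0 \cdot 5^k + c$ must be forced below $5 \cdot l_0 \cdot 5^k$; a base of $4$ would leave no room for the extra connectives. I would therefore fix the exact symbol-counting convention at the outset so as to pin down $c$ and confirm $c \leqslant l_0$; with that settled, the remainder is routine arithmetic.
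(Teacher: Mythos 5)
Your induction is correct and is evidently the intended argument (the paper states Lemma~\ref{lem:inequ-1} without giving a proof): the base case follows from the definition of $l_0$, and the inductive step counts four level-$k$ constituents plus a constant number $c$ of connective symbols, with the base $5$ providing exactly the slack $l_0 \cdot 5^k \geqslant c$ needed to absorb that constant while the strictness of the induction hypothesis keeps the final inequality strict. One detail needs patching: the recursive shape $A^k_1 \imp B^k_1 \dis A^k_i \dis B^k_j$ that you invoke only defines the formulas of levels $k+1$ for $k \geqslant 1$; the passage from level $0$ to level $1$ instead uses the explicitly listed formulas such as $A^1_1 = A^0_1 \con A^0_2 \imp B^0_1 \dis B^0_2$, whose shape is different. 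This does not damage the argument, since each level-$1$ formula still consists of exactly the four level-$0$ formulas $A^0_1, A^0_2, B^0_1, B^0_2$ joined by three binary connectives, so its length is exactly $l_0 + c$, which your bound $4 \cdot l_0 \cdot 5^0 + c < 5 \cdot l_0$ covers; and $c \leqslant l_0$ is immediate because $l_0$ is the total length of four formulas each of which already contains several connectives. With that one case distinction made explicit (treat $k = 0 \to 1$ via the listed formulas, $k \geqslant 1$ via the recursion), your proof is complete.
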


\begin{lemma}
  \label{lem:main-lemma-2}
  The formula $\varphi^\ast$ is computable in time polynomial
  in~$|\varphi|$.
\end{lemma}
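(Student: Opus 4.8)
The plan is to exploit three facts: the target level $K := k_\vp + k_0$ is only logarithmic in $|\vp|$; each individual formula $A^K_r$, $B^K_r$ is short by Lemma~\ref{lem:inequ-1}; and, despite the doubly exponential growth $n_{k+1} = (n_k - 1)^2$, only polynomially many auxiliary formulas ever need to be written down. First I would bound $K$. By minimality of $k_\vp$ we have $l_0 \cdot 5^{k_\vp - 1} \leqslant |\vp| < l_0 \cdot 5^{k_\vp}$, whence $k_\vp \leqslant \log_5(5|\vp|/l_0) = O(\log|\vp|)$; since $k_0$ is the fixed constant supplied by Lemma~\ref{lem:inequ-2}, we get $K = O(\log|\vp|)$ and in particular $2^K = |\vp|^{O(1)}$. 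The value $K$ itself is obtained by incrementing $k$ from $0$ and comparing $l_0 \cdot 5^k$ with $|\vp|$, which costs $O(\log|\vp|)$ arithmetic operations on numbers with $O(\log|\vp|)$ bits.

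Next I would isolate the formulas that must actually be produced. Forming $\vp^\ast$ requires only the $2s$ formulas $A^K_1, B^K_1, \ldots, A^K_s, B^K_s$, where $s \leqslant |\vp|$, so I would collect the needed formulas top-down. By the recursive definition, for $m \geqslant 2$ a needed formula $A^m_r$ (resp.\ $B^m_r$) is built from exactly the four level-$(m-1)$ formulas $A^{m-1}_1$, $B^{m-1}_1$, $A^{m-1}_i$, $B^{m-1}_j$, where $\langle i, j\rangle = g^{-1}(r)$; of these, $A^{m-1}_1$ and $B^{m-1}_1$ are common to every formula of level $m$. Hence, writing $N_m$ for the number of distinct level-$m$ formulas needed, we obtain the recurrence $N_{m-1} \leqslant 2N_m + 2$. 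Starting from $N_K = 2s$ this gives $N_{K-t} \leqslant 2^{t+1}(s+1)$ and therefore $\sum_{m \leqslant K} N_m = O(2^K s)$, which by the first step is polynomial in $|\vp|$.

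It then remains to compute each of these formulas efficiently. Recovering $\langle i, j\rangle = g^{-1}(r)$ amounts to inverting the spiral enumeration $\prec$: the $r$-th pair has coordinates $\Theta(\sqrt{r})$, and determining the ring it lies on together with its position within that ring is elementary arithmetic, polynomial in $\log r$ and hence in $|\vp|$. Having marked the needed formulas top-down, I would compute them bottom-up: levels $0$ and $1$ are the explicitly listed constant-size formulas, and each higher-level formula is assembled by concatenating its four already-stored constituents with a constant number of occurrences of $\imp$ and $\dis$. By Lemma~\ref{lem:inequ-1}, every $A^k_i$, $B^k_i$ with $k \leqslant K$ has length $< l_0 \cdot 5^K = O(|\vp|)$, so each assembly step costs $O(|\vp|)$; multiplied by the polynomial count of the previous step, the whole family is produced in polynomial time. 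Finally, $\vp^\ast$ is the result of replacing each of the at most $|\vp|$ variable occurrences in $\vp$ by the corresponding $A^K_r \dis B^K_r$, each of length $O(|\vp|)$, so $|\vp^\ast| = O(|\vp|^2)$ and the substitution runs in time $O(|\vp|^2)$.

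The main obstacle, and the only point at which the construction could a priori fail to be efficient, is precisely the doubly exponential explosion of $n_k$: a naive level-by-level computation of \emph{all} formulas is hopeless, since already $n_K$ is astronomically larger than any polynomial in $|\vp|$. The crux is therefore the second step together with Lemma~\ref{lem:inequ-1}: the recurrence $N_{m-1} \leqslant 2N_m + 2$ shows that only polynomially many formulas are ever touched, while Lemma~\ref{lem:inequ-1} guarantees that each of them is linearly short, so that the total running time is the product of a polynomial count and a linear per-formula cost.
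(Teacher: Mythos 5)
Your proposal is correct, and its quantitative core coincides with the paper's entire proof: the paper argues only that it suffices to bound $|\vp^\ast|$, derives $l_0 \cdot 5^{k_\vp + k_0} \leqslant 5^{k_0+1}|\vp|$ from the minimality of $k_\vp$, applies Lemma~\ref{lem:inequ-1} to bound each substituted formula by $O(|\vp|)$, and concludes $|\vp^\ast| < 2 \cdot 5^{k_0+1}|\vp|^2$ --- which is precisely your final step. Where you genuinely go beyond the paper is in the algorithmic part: the paper compresses it into the opening sentence ``it suffices to show that $|\vp^\ast|$ is polynomial in $|\vp|$'', tacitly assuming that $\vp^\ast$ can be produced in time polynomial in its length, whereas you make this explicit and rigorous. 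Writing $K = k_\vp + k_0$, your observations that $2^{K} = |\vp|^{O(1)}$, that the top-down marking recurrence $N_{m-1} \leqslant 2N_m + 2$ ensures only $O(2^K s)$ of the doubly exponentially many formulas of levels up to $K$ are ever written down (this matters, since $n_{k+1} = (n_k-1)^2$ makes $n_K$ exponential in $|\vp|$, so a naive level-by-level computation fails), and that the spiral enumeration $g$ is invertible in time polynomial in $\log r$, together discharge exactly the step the paper leaves to the reader. A marginal simplification is available: since each needed $A^K_r$, $B^K_r$ has length $O(|\vp|)$ by Lemma~\ref{lem:inequ-1} and every unfolding of the recursion emits at least one connective, a direct recursive expansion of the $2s$ top-level formulas already runs within the same polynomial budget, making the separate marking phase unnecessary. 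One small blemish you share with the paper rather than introduce: the inequality $l_0 \cdot 5^{k_\vp - 1} \leqslant |\vp|$ presupposes $k_\vp \geqslant 1$; when $|\vp| < l_0$ one has $k_\vp = 0$, but then $K = k_0$ is a constant and all bounds hold trivially.
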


\begin{proof}
  It suffices to show that $|\vp^\ast|$ is polynomial in $|\vp|$.
  Since ${k_\vp}$ is the least integer $k$ such that
  $|\vp| < l_0 \cdot 5^{k}$, surely
  $l_0 \cdot 5^{k_\vp - 1} \leqslant |\vp|$, and so
  $$
  l_0 \cdot 5^{k_\vp + k_0} \leqslant 5^{k_0 + 1} |\vp|.
  $$
  By Lemma~\ref{lem:inequ-1}, for every $i \in \{1, \ldots, n_{k_\vp + k_0}\}$,
  $$
  \begin{array}{lcl}
    |A^{k_\vp + k_0}_i| < l_0 \cdot 5^{k_\vp + k_0} \leqslant 5^{k_0 +
    1} |\vp| & \mbox{and} & |B^{k_\vp + k_0}_i| < l_0 \cdot 5^{k_\vp +
                            k_0} \leqslant 5^{k_0 + 1} |\vp|.
  \end{array}
  $$
  Hence, $|\vp^\ast| < 2 \cdot 5^{k_0 + 1} |\vp|^2$. 
\end{proof}

To obtain the desired result, it remains to show the following:

\begin{lemma}
  \label{lem:main-lemma}
  Let $L \in \{ \mathbf{FS}, \mathbf{MIPC} \}$.  Then, for every
  positive formula $\vp$,
  $$
  \begin{array}{lcl}
    \vp \in L & \iff & \vp^\ast \in L.
  \end{array}  
  $$
\end{lemma}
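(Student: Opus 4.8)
The plan is to prove the two implications separately; the forward one is immediate and the converse carries all the weight. For $\vp \in L \Rightarrow \vp^\ast \in L$, I would observe that $\vp^\ast$ is a substitution instance of $\vp$, obtained by substituting for each $p_r$ the one-variable formula $A_r^{k_\vp+k_0} \dis B_r^{k_\vp+k_0}$. Both $\mathbf{FS}$ and $\mathbf{MIPC}$, being the sets of formulas valid on a class of frames, are closed under uniform substitution: given a frame and a valuation, the truth sets of the substituted formulas form again a legitimate (persistent) valuation, because truth is preserved along $R$; hence a substitution instance can fail on an $L$-frame only if $\vp$ itself fails there under the induced valuation. Thus $\vp \in L$ yields $\vp^\ast \in L$.

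For the converse I would argue by contraposition. Assuming $\vp \notin L$, fix an $L$-model $\mathfrak{M}$, a world $w_0$, and a point $x_0 \in \Delta_{w_0}$ with $\mathfrak{M}, w_0, x_0 \not\models \vp$, and build a one-variable $L$-model $\mathfrak{M}^\ast$ refuting $\vp^\ast$. The construction embeds $\mathfrak{M}$ and grafts, at each of its points, a finite gadget---extra modal successors inside the fibres, together with extra intuitionistic successor worlds where the behaviour of the implications $G_2$ and $G_3$ must be probed---carrying a suitable valuation of the single variable $p$. Writing $k = k_\vp + k_0$, the gadgets are chosen so that at every original point $(w, x)$ and every $r \in \{1, \ldots, s\}$,
$$ \mathfrak{M}^\ast, w, x \models A_r^{k} \dis B_r^{k} \quad\Longleftrightarrow\quad \mathfrak{M}, w, x \models p_r. $$
Granting this, the standard semantic substitution property gives $\mathfrak{M}^\ast, w_0, x_0 \models \vp^\ast \iff \mathfrak{M}, w_0, x_0 \models \vp$, so $\vp^\ast$ is refuted in $\mathfrak{M}^\ast$ and $\vp^\ast \notin L$.

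The heart of the argument is the \emph{coding lemma} that justifies the existence of such gadgets, which I would prove by induction on the level $k$. It asserts that the pairs $(A_i^k, B_i^k)$ can be forced, at the root of a gadget, to realize any pattern from a sufficiently rich family, with at least $n_k$ of the pairs independently controllable, so that the disjunctions $A_r^k \dis B_r^k$ simulate independent, persistent propositional variables. The base case is a finite computation: one lists the combinations of truth values that the probes $G_1 = \Diamond p$, $G_2 = \Diamond p \imp p$, and $G_3 = p \imp \Box p$ can assume at a point as a function of its modal and intuitionistic successors, and reads off the induced values of $A_1^0, A_2^0, B_1^0, B_2^0$ and of the level-$1$ formulas. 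The inductive step analyses
$$ A_{g(i,j)}^{k+1} = A_1^k \imp B_1^k \dis A_i^k \dis B_j^k, \qquad B_{g(i,j)}^{k+1} = B_1^k \imp A_1^k \dis A_i^k \dis B_j^k, $$
and shows that playing the pivot pair $(A_1^k, B_1^k)$ off against each choice of $(i, j)$ controls the $(n_k - 1)^2$ new pairs independently---precisely the growth $n_{k+1} = (n_k - 1)^2$ already recorded. Since $n_{k_\vp+k_0} > s$, the first $s$ level-$k$ pairs encode $p_1, \ldots, p_s$.

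The hard part will be this inductive step: one must trace, through genuinely intuitionistic implications whose truth quantifies over all $R$-successors, how the values of $A_{g(i,j)}^{k+1}$ and $B_{g(i,j)}^{k+1}$ are determined by those at level $k$, and check that the resulting control over the new pairs is simultaneous and mutually independent across all $(i,j)$, all while respecting persistence of the valuation and the frame conditions $\Delta_w \subseteq \Delta_v$ and $S_w \subseteq S_v$ for $v \in R(w)$. A secondary concern is to run the construction uniformly for both logics: since an $\mathbf{MIPC}$-frame is an $\mathbf{FS}$-frame in which every $S_w$ is total, I would arrange the gadgets so that their modal relations are total, or can be totalised without disturbing the computed values of $G_1$, $G_2$, $G_3$, so that the same gadgets witness the refutation of $\vp^\ast$ in $\mathbf{FS}$ and in $\mathbf{MIPC}$ alike.
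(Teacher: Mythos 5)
Your overall architecture---the easy direction by closure of $L$ under uniform substitution (justified, correctly, by the observation that truth-sets of formulas are persistent and hence legitimate valuations), and the hard direction by contraposition via countermodel surgery---is the natural and evidently intended one. But what you have written is a plan, not a proof: the entire mathematical content of the lemma is concentrated in your ``coding lemma,'' which you state and then defer. Saying that gadgets exist which make $A_r^{k}\dis B_r^{k}$ track $p_r$ is essentially a restatement of what has to be proved; the base-case ``finite computation'' for $G_1$, $G_2$, $G_3$ and the inductive step for $A^{k+1}_{g(i,j)} = A^k_1 \imp B^k_1 \dis A^k_i \dis B^k_j$ are never carried out, and it is exactly there that the specific shape of these formulas matters. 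Your claim that all $n_k$ pairs are ``independently controllable'' is moreover stronger than anything you argue for: in such constructions the realizable truth-value patterns form a constrained family (tied together by persistence along $R$), and one needs a precisely formulated invariant saying which patterns can be forced and how they propagate to $R$-successors. Relatedly, stating the key equivalence $\mathfrak{M}^\ast, w, x \models A_r^{k} \dis B_r^{k} \iff \mathfrak{M}, w, x \models p_r$ only at points of the \emph{original} model is too weak to run your ``standard semantic substitution'' step: the clauses for $\imp$, $\Box$, and $\Diamond$ in $\vp^\ast$ quantify over all $R$-successor worlds and $S$-successor points of the \emph{enlarged} model, so the induction on subformulas of $\vp$ needs control of the substituted formulas at the gadget worlds and points as well, matching a persistent extension of the original valuation; you do not address this.

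Your uniformity remark for $\mathbf{MIPC}$ is also defective as sketched. In an $\mathbf{MIPC}$-frame, $S_w = \Delta_w \times \Delta_w$, so $G_1 = \Diamond p$ takes the same value at \emph{every} point of a given world: ``extra modal successors inside the fibres'' cannot be attached locally to one point, since any point added to $\Delta_w$ becomes an $S_w$-successor of all points of $w$ and thereby changes the evaluation of $\Diamond$- and $\Box$-subformulas (both those inside the $A_i^k$, $B_i^k$ and those coming from $\vp$'s own modalities) throughout that fibre. For the same reason, ``totalising'' the gadget relations is not an innocuous afterthought---it does in general disturb the computed values of $G_1$, $G_2$, $G_3$. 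In the $\mathbf{MIPC}$ case the required independence has to be engineered through the interplay of the intuitionistic dimension ($R$-successor worlds, with the expanding-domain condition $\Delta_w \subseteq \Delta_v$) and the points, and the surgery must preserve each frame class separately, since a refutation of $\vp$ over an $\mathbf{FS}$-frame must yield a refutation of $\vp^\ast$ over an $\mathbf{FS}$-frame, and likewise for $\mathbf{MIPC}$. Until the coding lemma is precisely stated and proved under these constraints, the hard direction remains open.
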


From Lemmas~\ref{lem:positive}, \ref{lem:main-lemma-2},
and~\ref{lem:main-lemma}, we immediately obtain the following:

\begin{theorem}
  \label{thr:main}
  Let $L \in \{ \mathbf{FS}, \mathbf{MIPC} \}$.  Then, there exists a
  polynomial-time computable function embedding $L$ into its own
  positive one-variable fragment.
\end{theorem}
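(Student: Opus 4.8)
The plan is to verify that the composite map $h \colon \vp \mapsto e(\vp)^\ast$, where $e(\vp) = F \imp \vp^f$, is the embedding asserted by the theorem, by chaining the three preceding lemmas. First I would record the two structural facts that make the composition legitimate: the formula $e(\vp)$ is positive (since both $F$ and $\vp^f$ are positive, as already observed), so that Lemma~\ref{lem:main-lemma} is applicable with $e(\vp)$ in the role of $\vp$; and, because $\cdot^\ast$ sends positive formulas to one-variable positive formulas, the output $e(\vp)^\ast$ is a positive formula in the single variable~$p$. Thus $h$ indeed lands in the positive one-variable fragment.

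For correctness I would simply concatenate the biconditionals. Lemma~\ref{lem:positive} gives $\vp \in L \iff e(\vp) \in L$, and, since $e(\vp)$ is positive, Lemma~\ref{lem:main-lemma} applied to $e(\vp)$ gives $e(\vp) \in L \iff e(\vp)^\ast \in L$. Stringing these together yields
$$
\vp \in L \iff h(\vp) \in L,
$$
uniformly for $L \in \{\mathbf{FS}, \mathbf{MIPC}\}$, which is the required equivalence.

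For the complexity claim I would argue that $h$ is a composition of two polynomial-time maps. The map $e$ is computable in polynomial time, and, crucially, $|e(\vp)|$ is bounded by a polynomial in $|\vp|$: the part $\vp^f$ has essentially the size of $\vp$, while $F_1$, $F_2$, and each conjunct of $F_3$ have size polynomial in $\md \vp \leqslant |\vp|$, with $F_3$ comprising at most $|\vp|$ conjuncts. The one point to handle with care is that Lemma~\ref{lem:main-lemma-2} must be invoked not for $\vp$ but for $e(\vp)$: the lemma bounds $|\psi^\ast|$ by a polynomial in $|\psi|$ and computes $\psi^\ast$ in time polynomial in $|\psi|$, so taking $\psi = e(\vp)$ and substituting the polynomial bound on $|e(\vp)|$ in terms of $|\vp|$ gives a polynomial bound, in terms of $|\vp|$, on both $|h(\vp)|$ and the running time, since a polynomial of a polynomial is again a polynomial.

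This last bookkeeping is the only place where any attention is needed; everything else is a direct concatenation of the lemmas. I do not anticipate a genuine obstacle, as all the substance has already been isolated into Lemmas~\ref{lem:positive}, \ref{lem:main-lemma-2}, and~\ref{lem:main-lemma}, and the theorem is their formal composition, together with the two observations that $e(\vp)$ is positive and that $e(\vp)^\ast$ uses only the variable~$p$.
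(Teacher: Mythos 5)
Your proposal is correct and follows exactly the paper's route: the paper derives the theorem ``immediately'' from Lemmas~\ref{lem:positive}, \ref{lem:main-lemma-2}, and~\ref{lem:main-lemma} by the same composition $\vp \mapsto e(\vp)^\ast$, with your careful bookkeeping (positivity of $e(\vp)$, applying Lemma~\ref{lem:main-lemma-2} to $e(\vp)$ rather than $\vp$, and closure of polynomials under composition) being precisely the routine details the paper leaves implicit.
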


\begin{corollary}
  \label{cor:int-main}
  Let $L \in \{ \mathbf{FS}, \mathbf{MIPC} \}$.  Then, the positive
  one-variable fragment of $L$ is polytime-equivalent to~$L$.
\end{corollary}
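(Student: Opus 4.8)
The plan is to unfold the definition of polytime-equivalence into two polynomial-time many-one reductions and to observe that the substantial one is handed to us by Theorem~\ref{thr:main}, while its converse is trivial. Write $V(L)$ for the membership problem ``given a formula $\vp$, is $\vp \in L$?'' and $V^{+}_{1}(L)$ for its restriction to positive one-variable formulas, i.e.\ ``given a positive formula $\psi$ with $|\mathop{\mathit{var}} \psi| \leqslant 1$, is $\psi \in L$?''. The positive one-variable fragment of $L$ is, by definition, the set of positive one-variable formulas lying in $L$, so polytime-equivalence of $L$ and this fragment is exactly the conjunction of the two reductions $V^{+}_{1}(L) \leqslant_{\mathrm{p}} V(L)$ and $V(L) \leqslant_{\mathrm{p}} V^{+}_{1}(L)$.

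The first reduction is the identity map. A positive one-variable formula $\psi$ is a yes-instance of $V^{+}_{1}(L)$ precisely when $\psi \in L$, i.e.\ precisely when $\psi$ is a yes-instance of $V(L)$; since $\psi$ already lies in the required syntactic class, the map $\psi \mapsto \psi$ is a correct reduction, and it is plainly computable in linear time. The second reduction is the content of Theorem~\ref{thr:main}: it provides a polynomial-time computable function $h$ (namely $h = (\cdot)^{\ast} \circ e$, so that $h(\vp) = e(\vp)^{\ast}$) such that $h(\vp)$ is a positive formula in the single variable $p$ and $\vp \in L \iff h(\vp) \in L$. Because $h(\vp)$ is already positive and one-variable, membership of $h(\vp)$ in $L$ coincides with $h(\vp)$ being a yes-instance of $V^{+}_{1}(L)$; hence $h$ reduces $V(L)$ to $V^{+}_{1}(L)$ in polynomial time, as required.

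There is no genuine obstacle at the level of the corollary: the weight of the argument sits entirely inside Theorem~\ref{thr:main} and, through it, inside Lemmas~\ref{lem:positive}, \ref{lem:main-lemma-2}, and~\ref{lem:main-lemma}. The only point deserving a moment's care is bookkeeping about the two directions, together with the observation that the syntactic class of $h(\vp)$ is fixed by construction, so that no separate membership-in-the-fragment test is needed and the trivial converse reduction really does close the equivalence. I would therefore present the corollary's proof in two short sentences, citing Theorem~\ref{thr:main} for the nontrivial direction and the identity map for the other.
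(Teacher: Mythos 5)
Your proposal is correct and matches the paper's (implicit) argument: the paper states the corollary without proof as an immediate consequence of Theorem~\ref{thr:main}, relying on exactly the two reductions you spell out --- the embedding $e(\cdot)^{\ast}$ for the nontrivial direction and the identity map for the converse. Your only addition is making this bookkeeping explicit, which is a faithful expansion rather than a different route.
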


The results presented here are not immediately applicable to obtaining
the computational complexity of finite-variable fragments of
intuitionistic modal logics since the complexity of full logics
remains unknown (we are only aware of decidability
results~\cite{Grefe98,WZ99,WZ99IM,AlSh06,Gir} for modal intuitionistic logics).




\begin{thebibliography}{10}

\bibitem{AlSh06}
  Natasha Alechina and Dmitry Shkatov.
  \newblock A general method for proving decidability of intuitionistic modal logics.
  \newblock {\em Journal of Applied Logic}, 4(3):219--230, 2006.
  
\bibitem{BS93}
 Patrick Blackburn and Edith Spaan.
 \newblock A Modal Perspective on the Computational Complexity of Attribute Value Grammar.
 \newblock{\em Journal of Logic, Language, and Information}, 2:129--169, 1993.

\bibitem{ChRyb03}
Alexander Chagrov and Mikhail Rybakov.
\newblock How many variables does one need to prove {P}{S}{P}{A}{C}{E}-hardness
  of modal logics?
\newblock In Philippe Balbiani, Nobu-Yuki Suzuki, Frank Wolter, and Michael
  Zakharyaschev, editors, {\em Advances in Modal Logic 4}, pages 71--82. King's
  College Publications, 2003.

\bibitem{DS02}
  St\'ephane Demri and Philippe Schnoebelen.
  \newblock The complexity of propositional linear temporal logics in
  simple cases.
  \newblock {\em Information and Computation}, 174, 84--103, 2002.

\bibitem{Dosen84}
  Kosta Do\v{s}en.
  \newblock Negative modal operators in intuitionistic
  logic.
  \newblock {\em Publications de l'Institut Math\'ematique} 35, 3--14, 1984.

\bibitem{Dosen86}
  Kosta Do\v{s}en.
  \newblock Negative modal operators in intuitionistic
  logic.
  \newblock Negation as a modal operator. Reports on Mathematical Logic  20, 15--27, 1986.   

\bibitem{FS77}
{Gis\`ele} 
{Fischer~Servi}. 
\newblock On modal logic with an intuitionistic base.
\newblock {\em Studia Logica}, 36(3):141--149, 1977.

\bibitem{Gir}
Marianna Girlando, Roman Kuznets, Sonia Marin, Marianela Morales, Lutz
Stra{\ss}burger.
\newblock Intuitionistic {S}4 is decidable.
\newblock arXiv preprint \texttt{arXiv:2304.12094}.

\bibitem{Grefe98}
Carsten Grefe.
\newblock Fischer {S}ervi's intuitionistic modal logic has the finite model
  property.
\newblock In M.~Kracht, M.~de~Rijke, H.~Wansing, and M.~Zakharyaschev, editors,
  {\em Advances in Modal Logic}, volume~1, pages 85--98. CSLI Publications,
  1998.

\bibitem{GKWZ}
Dov Gabbay, Agi Kurucz, Frank Wolter, and Michael Zakharyaschev.
\newblock {\em Many-{D}imensional Modal Logics: Theory and Applications},
  volume 148 of {\em Studies in Logic and the Foundations of Mathematics}.
\newblock Elsevier, 2003.

\bibitem{Halpern95}
Joseph~Y. Halpern.
\newblock The effect of bounding the number of primitive propositions and the
  depth of nesting on the complexity of modal logic.
\newblock {\em Artificial Intelligence}, 75(2):361--372, 1995.

\bibitem{Hemaspaandra01}
Edith Hemaspaandra.
\newblock The complexity of poor man's logic.
\newblock {\em Journal of Logic and Computation}, 11(4):609--622, 2001.

\bibitem{Prior57}
Arthur Prior.
\newblock {\em Time and Modality}.
\newblock Clarendon Press, Oxford, 1957.

\bibitem{Rybakov04}
Mikhail Rybakov.
\newblock Embedding of intuitionistic logic into its two-variable fragment and the complexity of this fragment.
\newblock {\em Logical Investigations}, 11:247--261, 2004. (In Russian)

\bibitem{Rybakov06}
Mikhail Rybakov.
\newblock Complexity of intuitionistic and {V}isser's basic and formal logics
  in finitely many variables.
\newblock In Guido Governatori, Ian~M. Hodkinson, and Yde Venema, editors, {\em
  Advances in Modal Logic 6}, pages 393--411. College Publications, 2006.

\bibitem{Rybakov07a}
Mikhail Rybakov.
\newblock Complexity of the constant fragment of the propositional dynamic logic.
\newblock {\em Herald of Tver State University. Series: Applied
Mathematics}, 5:5--17, 2007. (In Russian)

\bibitem{Rybakov07}
Mikhail Rybakov.
\newblock Complexity of finite-variable fragments of
  {E}{X}{P}{T}{I}{M}{E}-complete logics.
\newblock {\em Journal of Applied Non-classical logics}, 17(3):359--382, 2007.

\bibitem{Rybakov08}
Mikhail Rybakov.
\newblock Complexity of intuitionistic propositional logic and its fragments.
\newblock {\em Journal of Applied Non-Classical Logics}, 18(2--3):267--292,
  2008.



\bibitem{RShICTAC18}
Mikhail Rybakov and Dmitry Shkatov.
\newblock Complexity and expressivity of branching- and alternating-time
  temporal logics with finitely many variables.
\newblock In B.~Fischer B. and T.~Uustalu, editors, {\em Theoretical Aspects of
  Computing--ICTAC 2018}, volume 11187 of {\em Lecture Notes in Computer
  Science}, pages 396--414, 2018.

\bibitem{RShIGPL18}
Mikhail Rybakov and Dmitry Shkatov.
\newblock Complexity and expressivity of propositional dynamic logics with
  finitely many variables.
\newblock {\em Logic Journal of the IGPL}, 26(5):539--547, 2018.

\bibitem{RShSaicsit18}
Mikhail Rybakov and Dmitry Shkatov.
\newblock On complexity of propositional linear-time temporal logic with
  finitely many variables.
\newblock In J.~van Niekerk and B.~Haskins, editors, {\em Proceedings of
  {S}{A}{I}{C}{S}{I}{T}2018}, pages 313--316. ACM, 2018.

\bibitem{RShIGPL19}
Mikhail Rybakov and Dmitry Shkatov.
\newblock Complexity of finite-variable fragments of propositional modal logics
  of symmetric frames.
\newblock {\em Logic Journal of the IGPL}, 27(1):60--68, 2019.

\bibitem{RShJLC21a}
Mikhail Rybakov and Dmitry Shkatov.
\newblock Complexity of finite-variable fragments of products with {K}.
\newblock {\em Journal of Logic and Computation}, 31(2):426--443, 2021.

\bibitem{RShJLC22}
Mikhail Rybakov and Dmitry Shkatov.
\newblock Complexity of finite-variable fragments of products with
  non-transitive modal logics.
\newblock {\em Journal of Logic and Computation}, 32(5):853--870, 2022.

\bibitem{RShTCS22}
Mikhail Rybakov and Dmitry Shkatov.
\newblock Complexity of finite-variable fragments of propositional temporal and
  modal logics of computation.
\newblock {\em Theoretical Computer Science}, 925:45--60, 2022.

\bibitem{RShJLC23}
Mikhail Rybakov and Dmitry Shkatov.
\newblock Complexity function and complexity of validity of modal and
  superintuitionistic propositional logics.
\newblock To appear in \textit{Journal of Logic and Computation},
  \texttt{https://doi.org/10.1093/logcom/exac085}.

\bibitem{Spaan93}
  Edith Spaan.
  \newblock {Complexity of Modal Logics}. PhD thesis.
  \newblock {University of Amsterdam}, 1993.

\bibitem{Speranski} Stanislav O. Speranski.
  \newblock Negation as a modality in
  a quantified setting.
  \newblock {\em Journal of Logic and Computation} 31(5),
  1330--1355, 2021.

\bibitem{Sve03}
{\selectlanguage{english}
V\'it\v{e}slav \v{S}vejdar.
\newblock The decision problem of provability logic with only one atom.
\newblock {\em Archive for Mathematical Logic}, 42(8):763--768, 2003.
}

\bibitem{WZ99IM}
Frank Wolter and Michael Zakharyaschev.
\newblock Intuitionistic modal logics as fragments of classical bimodal logics.
\newblock In E.~Orlowska, editor, {\em Logic at Work}, pages 168--186.
  Springer, Berlin, 1999.
  
\bibitem{WZ99}
Frank Wolter and Michael Zakharyaschev.
\newblock Modal description logics: modalizing roles.
\newblock {\em Fundamenta Informaticae}, 39(4):411--438, 1999.


\end{thebibliography}


\end{document}